\theoremstyle{plain}
\newtheorem{theorem}{Theorem}[section]
\theoremstyle{definition}
\newtheorem{definition}[theorem]{Definition}
\newtheorem{example}[theorem]{Example}
\begin{document}

\title{On long knots in the full torus}

\author{Sera Kim}

\address{Sera Kim, Republic of Korea Naval Academy (Republic of Korea)\\
srkim85@gmail.com}

\author{Seongjeong Kim}

\address{Seongjeong Kim, Jilin University (China) \\
Bauman Moscow State Technical University (Russia) \\
ksj19891120@gmail.com}

\author{Vassily O. Manturov}

\address{Vassily O. Manturov, Moscow institute of physics and technology (Russia) \\ North-Eastern University (China)\\
Kazan Federal University (Russia)\\
vomanturov@yandex.ru}

\maketitle

\begin{abstract} The aim of this paper is to realise the techniques of picture-valued invariants and invariants valued in free groups for long knots in the full torus. Such knots and links are of a particular interest because of their relation to Legendrian knots, knotoids, $3$-manifolds and many other objects. Invariants constructed in the paper are powerful and easy to compare. This paper is a sequel of \cite{ma}. Long knots naturally appear in the study of classical knots \cite{fi,mo}.

\vspace{0.40in} {\em Keywords:\/} knots on cylinder; full torus; braids; Brunnian braids; picture; free-group; picture-valued invariant.\\
{\sl AMS(MOS) Subj. Class:\/} Primary 57M25. Secondary 57M27, 57K31.
\end{abstract}

\section{Introduction}
Virtual knot theory \cite{ka} is a proper generalisation of classical knot theory. Topologically, virtual knots are described as knots in thickened surfaces $S_g \times [0,1]$ modulo stabilisations and destabilisations \cite{ma2012}. Combinatorially, they can be described by admitting a new type of crossing, called {\em virtual crossing}, modulo generalised Reidemeister moves.

Virtual knots enjoy lots of properties never seen in classical knot theory. One of such properties is that if a virtual knot diagram $D$ is {\em complicated enough then it realises itself.}  \cite{ma2010} In some cases this means that if a diagram is {\em locally minimal} (can not be decreased in one step) then it is {\em globally minimal} (any diagram $D'$ of it \textquotedblleft contains $D$ inside\textquotedblright). This makes virtual knots similar to {\em words in free groups.}

This goal is achieved in \cite{ma2010} for {\em free knots}\footnote{We'll not need a formal definition of a free knot or a virtual knot in this paper.} (a drastic simplification of virtual knots) by constructing invariants valued in {\em pictures} or {\em linear combinations of knot diagrams}. In particular, in \cite{ma2010}, the third named author constructed an invariant $[\cdot]$ of free knots (it can be easily extended to virtual knots in various ways where the bracket of a diagram gives a linear combinations of diagrams obtained from it by some \textquotedblleft simplifications\textquotedblright.) For some diagrams we have a formula $$[D]=D;$$ here $D$ in the LHS is a particular knot diagram and $D$ in the RHS is just $D$ with coefficient $1$.

For $D'\equiv D$ we have $[D']=D$ which means that $D$ can be obtained from $D'$ by some {\em simplifications}.

In \cite{ma}, a (long) free knot invariant is constructed. This invariant is valued in a free product of some copies \footnote{In most of cases, such invariants can be extended to those valued in free groups by adding some (co)orientation; but the main effect is visible at the level of free products of $\mathbb{Z}_{2}$.} of $\mathbb{Z}_{2}$.

The reason why the bracket works so nicely is the existence of {\em parity} for classical knots. The parity is a way of distinguishing between {\em even} and {\em odd crossings} in such a way that some axioms are satisfied when crossings undergo Reidemeister moves. We are going to apply virtual techniques to classical knots and links.

The main problem was the absence of parity for classical knots and links \cite{fi} which is caused by the fact that $H_{1}(\mathbb{R}^{2},\mathbb{Z}_{2}) = \{0\}$. In this paper, we deal with knots in the full torus (thickened cylinder) where we use the parity coming from $H_{1}(S^{1}, \mathbb{Z}_{2}) = \mathbb{Z}_{2}$.

In contrast to the knot group (where we map {\em knot} $\mapsto$ {\em group}), we construct one specific group\footnote{Actually, once a good example is constructed, there are lots of ways to upgrade such invariants. In \cite{ma}, a bare count of crossings is generalised to some elements in a nice group.} which is pleasant to work with and invariants of our knots are valued in this concrete group.

The paper is organised as follows. In Section 2, we construct long knot diagrams on the cylinder. In Section 3 introduce {\em linear Gauss diagrams} for long knot diagrams on the cylinder and the {\em words} by assigning the letters to the endpoints of chords in the linear Gauss diagram. These words in a group $G''$ can be {\em invariants} for long knot diagrams on the cylinder. Section 4 is devoted to examples. We show that the invertibility property can be easily captured by one glance at the word in the group $\bar{G}$.\\

{\bf Acknowledgments.} The first author was supported by Basic Science Research Program through the National Research Foundation of Korea (NRF) funded by the Ministry of Education (NRF-2021R1I1A3045371). The second and the third named authors were supported by the Russian Foundation for Basic Research (grants No. 20-51-53022, 19-51-51004).

The authors would like to express their heartfelt gratitude to L. H. Kauffman, S. G. Gukov and I. M. Nikonov for their interest and fruitful discussions on the present work.

\section{Motivation}

A knot is a (smooth) embedding of a circle $S^1$ in $3$-dimensional space $\mathbb{R}^3$ up to isotopy and a knot diagram is a projection on $\mathbb{R}^2$ with the under/over information. Two knot diagrams are equivalent if there is a finite sequence of Reidemeister moves from one diagram to the other. A link is an embedding of several circles in $\mathbb{R}^3$.

A {\em long knot diagram} $K$ is an embedding from $\mathbb{R}$ to $\mathbb{R}^2$ such that there is a real number $r$ so that $f(x) = (x,0)$ for any real number x and $|x|>r$. We also consider a {\em long knot diagram} $K$ with the real number $r$ on a double-punctured sphere as Fig. \ref{fig:ex5_0}. Actually, it can be considered as a long knot diagram on the cylinder.

\begin{figure}[ht]
\begin{center}
\resizebox{0.80\textwidth}{!}{%
\includegraphics{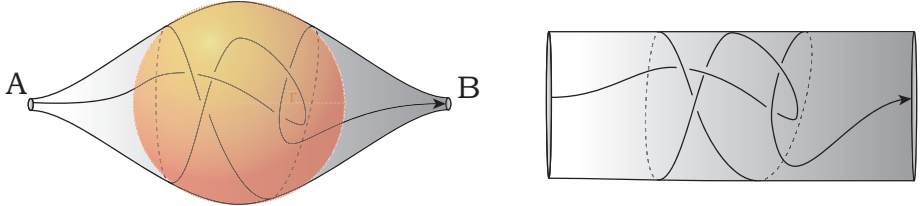} }
\caption{Long knot diagram on the cylinder} \label{fig:ex5_0}
\end{center}
\end{figure}

For a $2$-component link $L = K_1 \cup K_2$ in $S^3$, if one of the two components is unknotted, say $K_1$, the $K_2$ lies in the solid torus $T$ which is knot complement $S^3 \setminus N(K_1)$ of $K_1$. The Hopf link diagram is shown in Fig. \ref{fig:ex1} which is for the Hopf link $L_2a_1$.

\begin{figure}[ht]
\begin{center}
\resizebox{0.8\textwidth}{!}{%
\includegraphics{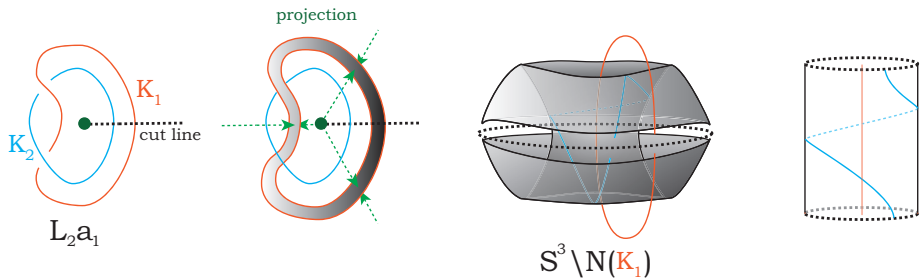} }
\caption{How to construct the long knot diagram on the cylinder} \label{fig:ex1}
\end{center}
\end{figure}

Then the projection of $K_2$ to $\partial T$ is the long knot diagram on the cylinder by cutting along the longitude of $T$. If $K_2$ is equivalent to a trivial component of $L$ as $K_1$, we could investigate the two long knot diagrams from a 2-component link $L$.

The group-valued invariants in the present paper can be extended to {\em closed knots} in the full torus (without free ends); the corresponding invariants will be conjugacy classes of elements in the groups similar to $G''$ and $\bar{G}$ etc.

A crucial fact is that in this way one can construct not only {\em isotopy invariants} but also {\em cobordism invariants}.

We will devote a separate paper to the cobordisms of knots in the full torus.

We add yet one more motivation for the study of knots in the full torus and their cobordisms. We'll write a sequel to this paper. One of the crucial tools is the study of pairs of ``similar'' knots one of which is slice and the other is not. This search is performed by using surgeries along links (hence, one can use a link component as a source of homology). Hence sliceness of knots and links in the full torus is of a great importance for modern topology and new handy sliceness obstructions are very actual.

\section{The group presentations for long knots in the full torus}
In this paper, our knots are in the full torus whence diagrams are on the cylinder.
For a long knot diagram $K$ on the cylinder, we consider the {\em compact knot diagram} $\bar{K}$ from $K$ which is a knot diagram by connecting the infinity points of $K$. The {\em Gauss diagram} for $\bar{K}$ is a circle together with oriented $m$ chords connecting $2m$ points on the circle where the points are related to the crossing information as Fig. \ref{fig:chord}.

\begin{figure}[ht]
\begin{center}
\resizebox{0.70\textwidth}{!}{%
\includegraphics{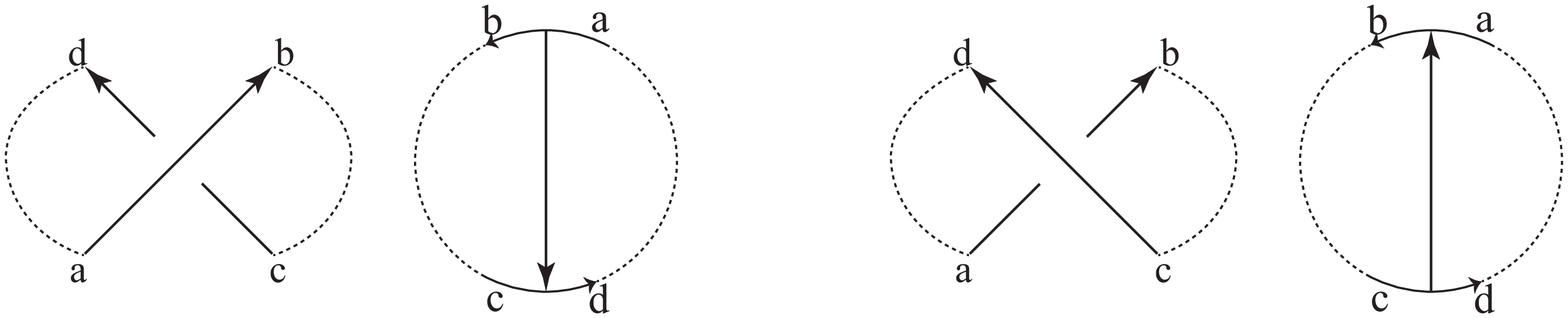} }
\caption{A crossing of a knot diagram and the corresponding chord of its Gauss diagram} \label{fig:chord}
\end{center}
\end{figure}

Similarly, by considering the Gauss diagram for the compact knot diagram $\bar{K}$ with the infinity points for $K$, we introduce the {\em linear Gauss diagram} for $K$ as shown in Fig. \ref{fig:ex5_1}, and it is denoted by $G(K)$.

\begin{figure}[ht]
\begin{center}
\resizebox{0.60\textwidth}{!}{%
\includegraphics{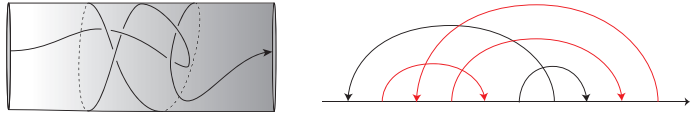} }
\caption{Long knot diagram and its linear Gauss diagram} \label{fig:ex5_1}
\end{center}
\end{figure}

In \cite{po}, Polyak introduced the generating set for Reidemeister moves. These moves can be translated to the moves for the Gauss diagrams as Fig. \ref{fig:ex7}. Polyak's moves are the same for linear diagrams of classical/virtual/whatever else knots.

\begin{figure}[ht]
\begin{center}
\resizebox{0.9\textwidth}{!}{%
\includegraphics{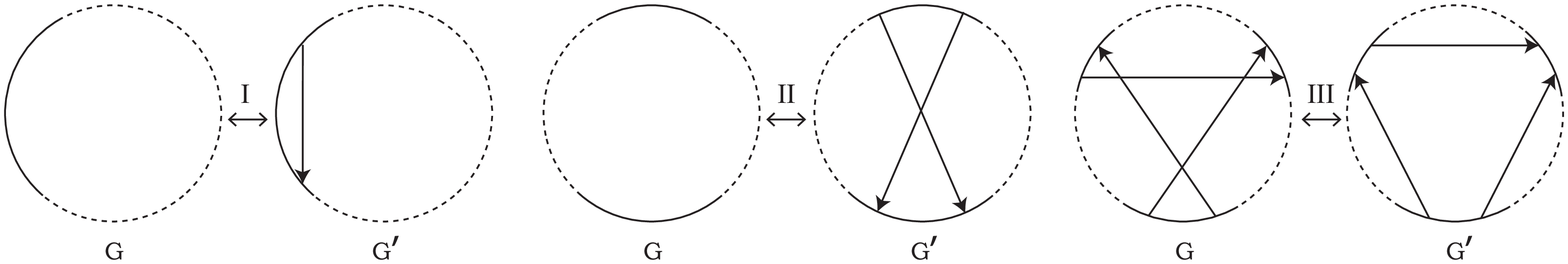} }
\caption{The generating set for Reidemeister moves} \label{fig:ex7}
\end{center}
\end{figure}

In the linear Gauss diagram $G(K)$ for a long knot diagram $K$ on the cylinder, we say that two chords of a linear Gauss diagrams are {\em linked} if their ends alternate. There are two subsets of chords via the {\em Gaussian parity}; one is the set of {\em odd chords} in $G(K)$ which are linked with oddly many chords in $G(K)$ and the other is the set of {\em even chords} in $G(K)$ which are linked with evenly many chords in $G(K)$.

Following \cite{ma}, we give the following definitions \ref{def1} of {\em types} and {\em positions} for the linear Gauss diagrams.

\begin{definition}\label{def1}\cite{ma}
Let $K$ be an oriented long knot diagram on the cylinder and $G(K)$ be a linear Gauss diagram for $K$. We enumerate the endpoints as they appear according to the orientation; we say that an odd chord is {\em of the first type} if it is linked with evenly many even chords and {\em of the second type} if it is linked with oddly many even chords.

Moreover, we enumerate the endpoints of chords in $G(K)$ along the orientation of $K$. The endpoints are called to be {\em in odd position} if the number of endpoints containing itself from the starting point of $G(K)$ is odd. Otherwise the endpoint is called to be {\em in even position}.
\end{definition}

From Definition \ref{def1}, we denote each endpoint of a chord depending on parity and their position by $a,b^{\pm 1}, c^{\pm 1}$ as follows:
\begin{itemize}
\item The endpoint is denoted by $a$ if it is related to an even crossing.

\item The endpoint in even position is denoted by $b$ if it is related to the over odd crossing which is of the first type.

\item The endpoint in even position is denoted by $b'$ if it is related to the over odd crossing which is of the second type.

\item The endpoint in even position is denoted by $c$ if it is related to the under odd crossing which is of the first type.

\item The endpoint in even position is denoted by $c'$ if it is related to the under odd crossing which is of the second type.

\item For endpoints in odd position, we use the same letters with negative exponents:

$b^{-1}$, $(b')^{-1}$, $c^{-1}$, $(c')^{-1}$.

\end{itemize}

\begin{example}
Let $K$ be an oriented long knot from the link $L_{7}a_{1}$ and let $G(K)$ be the linear Gauss diagram of $K$, see Fig.~\ref{fig:ex4}. The red chords of $G(K)$ are even chords, and the black chords are odd chords. Since each odd chord intersects with even chords (red chords) oddly many times, all odd chords are of the second type. Now let us associate letters $a,b^{\pm 1},b'^{\pm 1},c^{\pm 1},c'^{\pm 1}$. With the first (leftmost) chord end, being even position undercrossing, we associate $c'$. With the next chord end (odd position overcrossings) we associate $(b')^{-1}$. The next chord end gives rise to $(c')^{-1}$. The final chord end gives rise to $b'$. See Fig. \ref{fig:ex4}.

\begin{figure}[ht]
\begin{center}
\resizebox{0.90\textwidth}{!}{%
\includegraphics{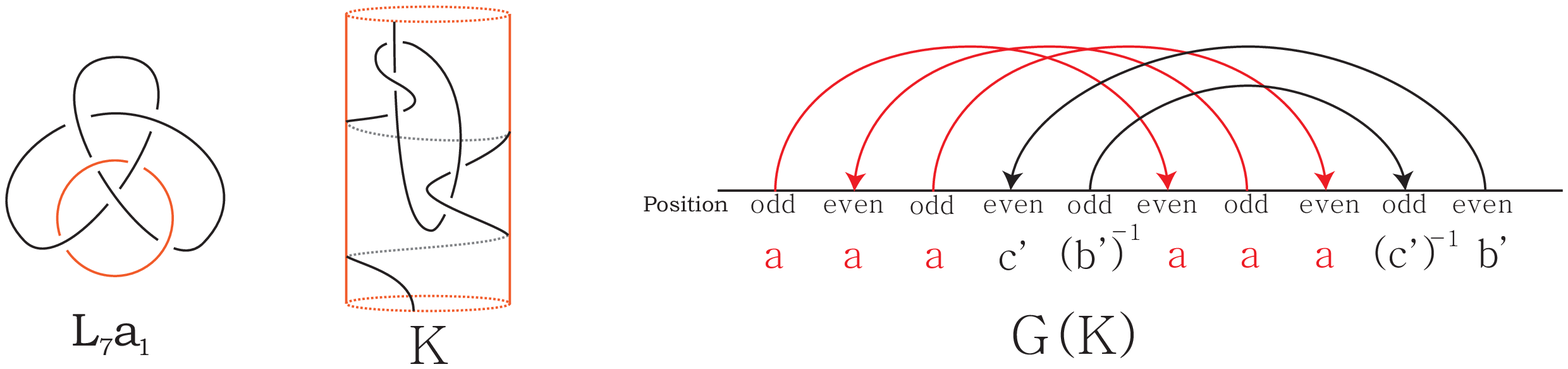} }
\caption{A long knot diagram $K$ on the cylinder obtained from the link $L_{7}a_{1}$ and the linear Gauss diagram $G(K)$ from $K$} \label{fig:ex4}
\end{center}
\end{figure}
\end{example}

Let us define the group $G''$ by
$$G'' = \langle a,b,b',c,c' | a^2=1, ab=(b')^{-1}a, ac=(c')^{-1}a, cb^{-1}=b'(c')^{-1}, b^{-1}c=(c')^{-1}b'\rangle$$
$$=\langle a,b,b',c,c' | a^2 =1, b'=ab^{-1}a, c'=ac^{-1}a, c'=bc^{-1}b', c'=b'c^{-1}b \rangle$$
$$=\langle a,b,c | a^2 =1, bac=cab \rangle=\langle a,d,e | a^2 =1, de=ed \rangle$$ where $d=bc^{-1}$ and $e=ba$. It is equivalent to $\mathbb{Z}_{2} * (\mathbb{Z} \oplus \mathbb{Z})$.

Its Cayley graph is shown in Fig. \ref{fig:cayley}.

\begin{figure}[ht]
\begin{center}
\resizebox{0.50\textwidth}{!}{%
\includegraphics{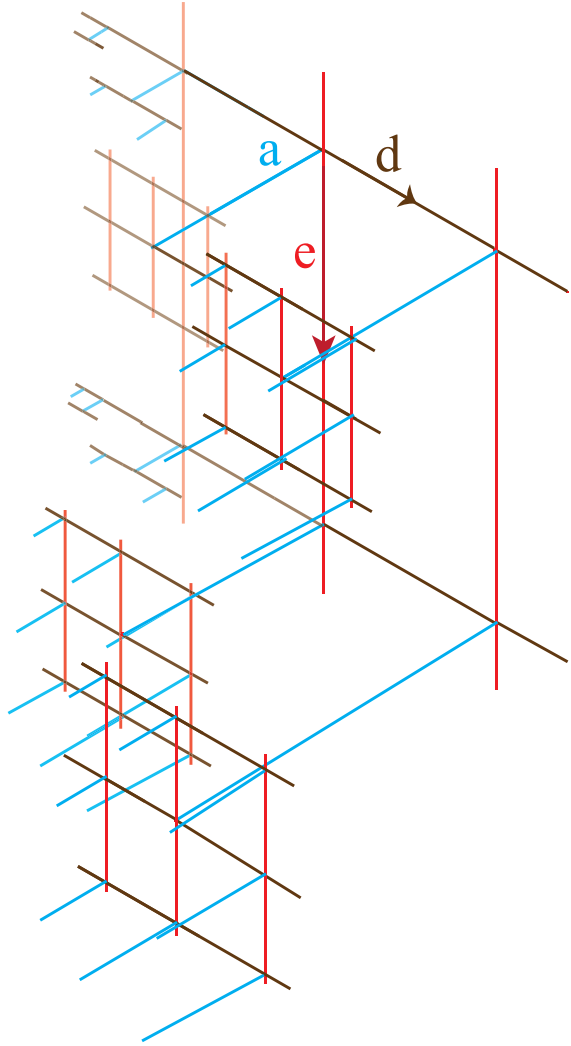} }
\caption{The Cayley graph for $G''$} \label{fig:cayley}
\end{center}
\end{figure}

Now we construct a map $\phi''$ from the set of oriented long knot diagrams on the cylinder to the group $G''$: for each oriented long knot diagram $K$ on the cylinder, the word $\phi''(K)$ in $G''$ is obtained by using letters $a,b(=ea),b'(=e^{-1}a),c(=d^{-1}ea),c'(=e^{-1}da)$ along the orientation of $K$, which correspond to ends of chords as described in above. For example, for the oriented long knot $K$ in the full torus, see  Fig.~\ref{fig:ex4} chord end gives rise to one of $a,b^{\pm 1},c^{\pm 1},(b')^{\pm 1},(c')^{\pm 1}$ as described in Fig.~\ref{fig:ex4}. This yields
$$\phi''(K) = aaac'(b')^{-1}a(c')^{-1}b'\equiv ae^{-1}ded^{-1}a \equiv 1 \in G''.$$

\begin{theorem}\label{thm1}
Let $\phi''$ be the map defined as above. Then $\phi''(K)$ is an invariant of oriented long knots $K$ in the full torus.
\end{theorem}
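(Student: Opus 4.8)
The plan is to show that the element $\phi''(K)\in G''$ is unchanged when the long knot diagram $K$ on the cylinder is modified by a Reidemeister move; since the word $\phi''(K)$ is read off the linear Gauss diagram $G(K)$ by a deterministic procedure, Reidemeister invariance is the only thing to prove. By \cite{po} it suffices to check invariance under the generating set of moves, translated into the local moves on linear Gauss diagrams shown in Fig.~\ref{fig:ex7}; and since a Reidemeister move on a long diagram can always be supported in the bounded part, the passage to the compact diagram $\bar K$ used to define $G(K)$ (in particular the choice of a starting endpoint) introduces no ambiguity. So the proof reduces to a short list of local modifications of $G(K)$, and for each one I would track the effect on the letters attached to the chord ends; the bookkeeping is analogous to \cite{ma}.

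First I would record how Gaussian parity, position and type are affected. A first Reidemeister move inserts or deletes a chord both of whose ends are consecutive; such a chord is linked with no other chord, hence is even, so both its ends carry the letter $a$, it changes no other parity or type, and it shifts all later positions by $2$, hence changes no other letter. A second Reidemeister move inserts or deletes two chords that are linked with exactly the same subset of the remaining chords (and with each other $0$ or $1$ times); these two chords therefore have equal parity and, when that parity is odd, equal type (neither being even); their over-ends are consecutive and their under-ends are consecutive, and consecutive ends occupy positions of opposite parity, so in the word they contribute two adjacent pairs $\zeta^{\varepsilon}\zeta^{-\varepsilon}$ and $\eta^{\delta}\eta^{-\delta}$ with $\zeta,\eta\in\{a,b,b',c,c'\}$, all remaining letters being untouched because positions and linking numbers change by even amounts. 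A third Reidemeister move involves three chords that are pairwise linked once and linked with the same remaining chords; the pairwise links contribute an even amount to each Gaussian parity and each remaining chord links an even number of the three, so among the three chords an even number are odd --- zero or two --- and the move may additionally change their types. Consequently an R1 move changes $\phi''(K)$ by inserting or deleting the adjacent subword $a^{2}=1$, and an R2 move changes it by two adjacent free cancellations $\zeta^{\varepsilon}\zeta^{-\varepsilon}=1$; both are invisible in $G''$.

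There remains the essential case of the third Reidemeister move. If all three participating chords are even, all six participating ends carry $a$ and are merely permuted inside the three two-element blocks lying on the three strands; since an even end always receives $a$, the induced position shifts are immaterial, so the word is literally unchanged. If exactly two of the three are odd and one is even, the move swaps the two ends lying on two of the strands --- inverting their exponents, since their position parities get exchanged --- and changes the types of the two odd chords accordingly; collecting the six ends into their three two-letter blocks, the resulting change of $\phi''(K)$ is, in each of the finitely many subcases (indexed by which chord is even, by the position parities of the six ends, by the two types, and by the orientation pattern fixing the Gauss configuration of the third-move generator of \cite{po}), exactly an application of one of the relations $ab=(b')^{-1}a$, $ac=(c')^{-1}a$, $cb^{-1}=b'(c')^{-1}$, $b^{-1}c=(c')^{-1}b'$, together with $a^{2}=1$ --- indeed these relations were chosen to absorb precisely these changes. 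Running through the subcases and applying the matching relation each time yields $\phi''(K)=\phi''(K')$ for $K'$ obtained from $K$ by a Reidemeister move, so $\phi''$ descends to long knots in the full torus, which is Theorem~\ref{thm1}.

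The step I expect to be the main obstacle is this last one: making the list of R3 subcases exhaustive and, in each subcase, reading off the correct primed or unprimed letters (from the induced type change) and the correct exponents (from the position-parity change) so that the net effect is recognised as \emph{exactly} one quadratic relation of $G''$ and not merely something implied by it. It is convenient here to use the simplified presentation $G''=\langle a,d,e\mid a^{2}=1,\ de=ed\rangle$ with $b=ea$, $b'=e^{-1}a$, $c=d^{-1}ea$, $c'=e^{-1}da$, under which the four R3 identities collapse to the single relation $de=ed$ and the subcases can be verified uniformly by rewriting the blocks in the letters $a,d,e$.
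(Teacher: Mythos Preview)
Your proposal is correct and follows essentially the same route as the paper's proof: check invariance under Polyak's generating Reidemeister moves on linear Gauss diagrams, observing that $R_1$ inserts $a^2$, $R_2$ inserts cancelling pairs, and $R_3$ flips the position parities and the types of the odd chords, yielding precisely the defining relations of $G''$. Your write-up is in fact more detailed than the paper's (you explicitly separate the all-even $R_3$ case and track the effect on the untouched chord ends), and your closing suggestion to verify the $R_3$ subcases uniformly via the $(a,d,e)$ presentation is a convenient device the paper does not use.
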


\begin{proof}
For an oriented long knot diagram $K$ on the cylinder, let $K'$ be an oriented long knot diagram from $K$ by a single Reidemeister move so that the number of crossings of $K'$ is more than or equal to the number of crossings of $K$.

For the $R_1$ move, the new chord of $G(K')$ is an even chord and its consecutive endpoints are denoted by $a$. Then $\phi''(K')$ is same as the word by adding the word $a^2$ into $\phi''(K)$ and $a^2$ is trivial in the group presentation $G''$. That is, $\phi''(K) \equiv \phi''(K')$ in $G''$.

For the $R_2$ move, we have two new chords of $G(K')$. The respective letters cancel in the group $G''$.

For the $R_3$ move, we denote the corresponding chords in $G(K)$ by $d_1$, $d_2$ and $d_3$, and the corresponding chords in $G(K')$ are denoted by $e_1$, $e_2$ and $e_3$. Then all parities for positions of endpoints of $d_{i}$ differ from those for $e_{i}$ for $i=1,2,3$. If $d_{i}$ is an odd chord of the first type (the second type), then $e_{i}$ is an odd chord and of the second type (the first type), respectively. Then we get the four relations $ab=(b')^{-1}a$, $ac=(c')^{-1}a$, $cb^{-1}=b'(c')^{-1}$, and $b^{-1}c=(c')^{-1}b'$.
\end{proof}

\section{The group presentation for long knots in the full torus II}

In the previous section, we associate letters and their inverses with endpoints of chords in the linear Gauss diagram for long knot diagrams on the cylinder. Specifically, the exponent ($1$ or $-1$) depends on the position of the endpoint.

From now on, we introduce another way to define exponents of letters depending on the sign of chords (crossings). Let $K$ be an oriented long knot diagram on the cylinder and $G(K)$ be the linear Gauss diagram for $K$. For a chord $d$ in $G(K)$, the sign of $d$ is defined as the sign of the corresponding crossing in $K$. The sign of a chord $d$ is denoted by $i(d)$.

\begin{figure}[ht]
\begin{center}
\resizebox{0.35\textwidth}{!}{%
\includegraphics{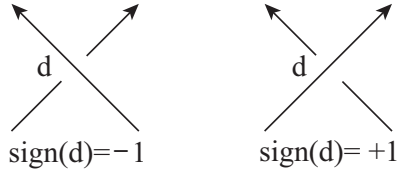} }
\caption{Signs for crossings} \label{fig:ex6_1}
\end{center}
\end{figure}

Similarly, let $K$ be an oriented long knot diagram on the cylinder and $G(K)$ be a linear Gauss diagram. For each chord $d$ in $G(K)$, we associate letters to endpoints of $d$ as follows:

\begin{itemize}
\item The endpoint of $d$ is denoted by $a^{i(d)}$ if it is related to the over even crossing.

\item The endpoint of $d$ is denoted by $b^{i(d)}$ if it is related to the over odd crossing and $d$ is of the first type.

\item The endpoint of $d$ is denoted by $(b')^{i(d)}$ if it is related to the over odd crossing and $d$ is of the second type.

\item The endpoint of $d$ is denoted by $a^{-i(d)}$ if it is related to the under even crossing.

\item The endpoints of $d$ is denoted by $c^{-i(d)}$ if it is related to the under odd crossing and $d$ is of the first type.

\item The endpoint of $d$ is denoted by $(c')^{-i(d)}$ if it is related to the under odd crossing and $d$ is of the second type.
\end{itemize}

For example, we consider the oriented long knot diagrams on the cylinder from the $2$-component link $L_{6}a_{1}$ in \cite{ka1}. This link gives rise to the word given in Fig. \ref{fig:ex6_0}.

\begin{figure}[ht]
\begin{center}
\resizebox{0.70\textwidth}{!}{%
\includegraphics{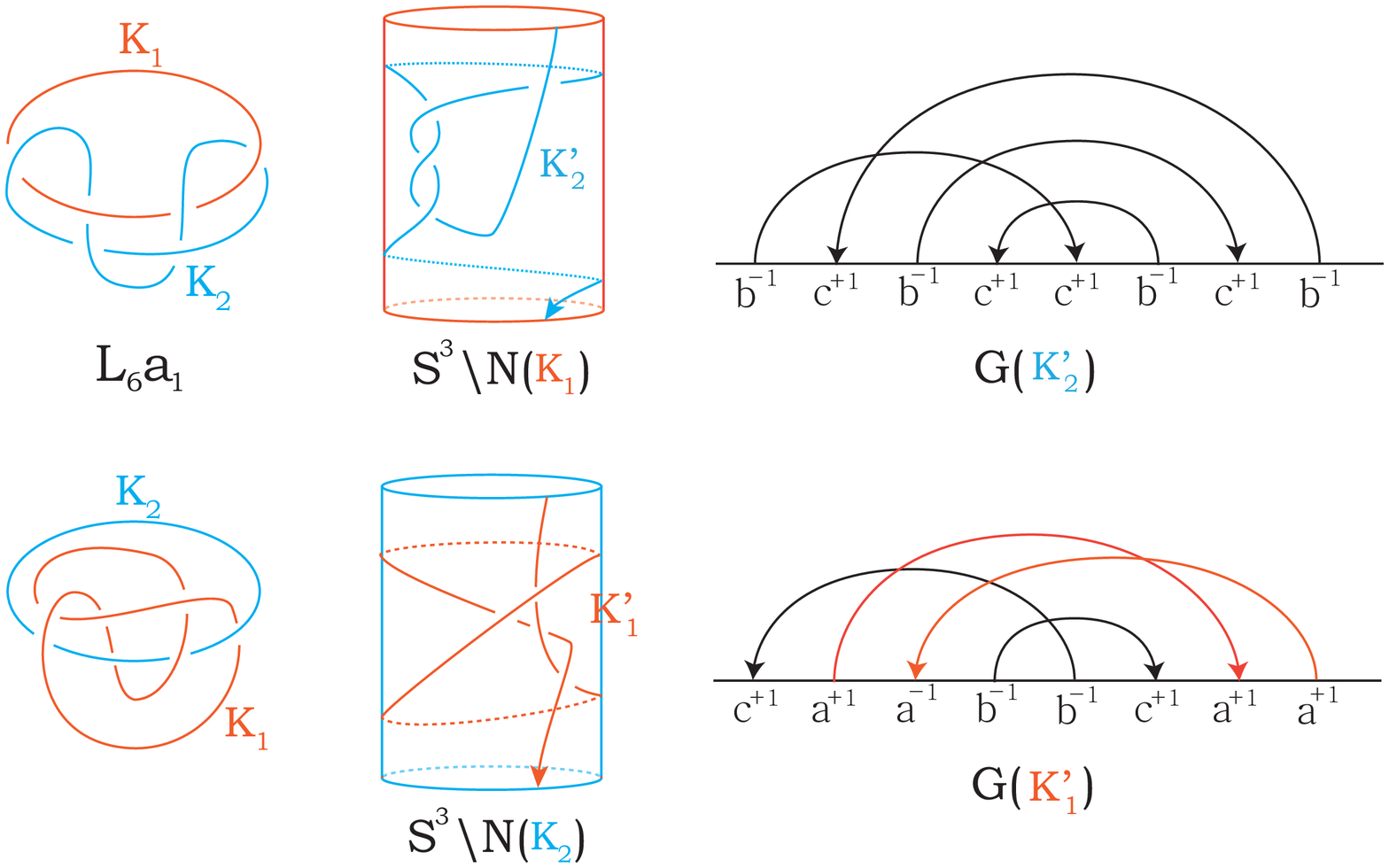} }
\caption{Example} \label{fig:ex6_0}
\end{center}
\end{figure}

Similarly to the previous section, we define the map ${\bar \phi}$ from long knot diagrams to the group ${\bar G}$.

$$\bar{G} = \langle a,b,b',c,c' |  b'=aba^{-1}=a^{-1}ba, c'=aca^{-1}=a^{-1}ca,$$
$$bc^{-1}=(c')^{-1}b', b'c^{-1}=(c')^{-1}b, b(c')^{-1}=c^{-1}b', b'(c')^{-1}=c^{-1}b \rangle =$$
$$=\langle a,b,c | a^{2}b=ba^{2}, a^{2}c=ca^{2}, b^{2}a=ab^{2}, c^{2}a=ac^{2}, abc=cba, cab=bac\rangle.$$

For the oriented long knot diagrams in the Fig. \ref{fig:ex6_0}, $$\bar{\phi}(K'_{2}) = (b^{-1}c)^{2}(cb^{-1})^{2}~~~~\mbox{and}~~~~\bar{\phi}(K'_{1}) = cb^{-1}b^{-1}c,$$ and they are both non-trivial since the sum of powers of $b$ and $c$ respectively for each word are nonzero.

\

Let $\bar{\phi}$ be defined as above; they are valued in the group $\bar{G}$. For an oriented long knot diagram $K$, $\bar{\phi}(K)$ is defined as a word consisting of letters on the linear Gauss diagram $G(K)$ along the orientation.

\begin{theorem}\label{thm2} $\bar{\phi}$ is an invariant of long knots in the full torus.
\end{theorem}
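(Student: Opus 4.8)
The plan is to follow the proof of Theorem~\ref{thm1} almost verbatim; the only thing that changes is the rule assigning exponents to endpoints, so the bulk of the argument is already in place. By Polyak's theorem \cite{po}, the moves of Figure~\ref{fig:ex7} generate all Reidemeister moves on linear Gauss diagrams, so it suffices to show that $\bar\phi(K)\equiv\bar\phi(K')$ in $\bar G$ whenever $K'$ is obtained from $K$ by one of these generating moves. As in Theorem~\ref{thm1} I would arrange that $K'$ has at least as many crossings as $K$, so each generating move \emph{adds} one or two chords and reorders only a bounded block of endpoints of $G(K)$, leaving the remainder of the word untouched. I would then run through the moves one at a time, tracking the inserted and reordered letters.

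For $R_1$ the new chord is a kink: its two endpoints are adjacent in $G(K')$, it is linked with no other chord and hence even, so the two endpoints are labelled $a^{i(d)}$ (overcrossing) and $a^{-i(d)}$ (undercrossing); being adjacent, they cancel as inverses and $\bar\phi(K')\equiv\bar\phi(K)$, with no relation of $\bar G$ invoked (this replaces the use of $a^{2}=1$ in Theorem~\ref{thm1}). For $R_2$ the two new chords $p,q$ have opposite signs; since the two arcs of the bigon carry no other crossing, $p$ and $q$ are linked with exactly the same collection of other chords, so they have the same Gaussian parity and, when odd, the same type, while along the two arcs of the bigon the over/under rôles match up. I would run through the resulting short list of configurations (even/even; odd of the first type twice; odd of the second type twice; and the two orientation variants of $\Omega_2$ in Polyak's list) and observe that on each of the two arcs the two contributed letters are mutually inverse and adjacent, hence cancel; again $\bar\phi(K')\equiv\bar\phi(K)$ with no relation used. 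Thus the whole weight of the argument falls on $R_3$.

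For $R_3$ the signs of the three participating chords are unchanged, while their Gaussian parities/types and the cyclic order of their six endpoints transform exactly as in the proof of Theorem~\ref{thm1} (in particular, the first/second type of an odd chord may be interchanged). I would read off the six-letter block before and after the move in each configuration and identify the change with one of the defining relations of $\bar G$: the ``braid-type'' relations $abc=cba$ and $cab=bac$ (equivalently $bc^{-1}=(c')^{-1}b'$, $b'c^{-1}=(c')^{-1}b$, etc.) when the triple consists of odd chords, and the commutation relations $a^{2}b=ba^{2}$, $a^{2}c=ca^{2}$ (together with their companions $b^{2}a=ab^{2}$, $c^{2}a=ac^{2}$) when some of the three chords are even. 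Since $\bar G$ is presented by precisely these relations, we get $\bar\phi(K')\equiv\bar\phi(K)$ and the theorem follows.

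The main obstacle is the $R_3$ bookkeeping. For each sign/height pattern of the triangle and each parity--type assignment of the three chords one must record the exact cyclic order of the six endpoints, write down the word before and after, and recognise which relation identifies them; this is where sign and orientation errors are easiest to make. Reducing first to Polyak's single oriented $\Omega_3$ move (together with $\Omega_1,\Omega_2$) cuts the number of cases down drastically, and I would organise the computation so that the relations of $\bar G$ emerge uniformly rather than being matched one configuration at a time. A secondary point I would want to pin down carefully — since it is what makes the letter assignment of this section consistent in the first place — is the behaviour of the Gaussian parity and of the first/second type of a chord under each generating move (the new kink chord is even; the two new $\Omega_2$-chords have equal parity and type; and the effect of $\Omega_3$ is as in Theorem~\ref{thm1}); this is precisely the point at which $\bar G$ is tuned to the parity coming from $H_{1}(S^{1};\mathbb{Z}_{2})=\mathbb{Z}_{2}$ rather than to the ambient plane.
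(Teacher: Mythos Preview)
Your overall approach is the same as the paper's---check invariance under Polyak's generating moves, with $R_1$ and $R_2$ producing cancelling letters and the defining relations of $\bar G$ absorbing the $R_3$ changes---and your treatment of $R_1$ and $R_2$ is correct. The gap is in your $R_3$ case split. You divide into ``the triple consists of odd chords'' versus ``some of the three chords are even'', but the first case never occurs: by the parity axiom (which the Gaussian parity on the cylinder satisfies), the number of odd chords among the three participating in an $R_3$ move is always even, so the only possibilities are \emph{all three even} or \emph{exactly one even and two odd}. In the all-even case every endpoint letter is $a^{\pm1}$ and the three transpositions of adjacent endpoints are trivially invariant. In the one-even/two-odd case, the three pairs of adjacent endpoints that get swapped are of two kinds: two of the pairs mix the even chord with an odd chord and yield the relations $b'=aba^{-1}=a^{-1}ba$, $c'=aca^{-1}=a^{-1}ca$ (whence $a^{2}b=ba^{2}$, $a^{2}c=ca^{2}$), while the remaining pair involves the two odd chords and yields the $b,c$-relations $bc^{-1}=(c')^{-1}b'$, $b'c^{-1}=(c')^{-1}b$, $b(c')^{-1}=c^{-1}b'$, $b'(c')^{-1}=c^{-1}b$ (whence $abc=cba$, $cab=bac$, and also $b^{2}a=ab^{2}$, $c^{2}a=ac^{2}$). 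So both families of relations you list come from the \emph{same} $R_3$ configuration, not from separate ``all odd'' and ``some even'' cases.

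A smaller point: the $R_3$ move does not alter a single six-letter block; rather it swaps three disjoint pairs of adjacent endpoints sitting on three (generally separated) arcs of the core line, so the comparison should be made pair by pair, each swap being matched with one two-letter relation as above. Once you reorganise the case analysis this way your argument goes through and coincides with the paper's.
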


\begin{proof}
Let $G$ be a linear Gauss diagram for a long knot diagram on the cylinder, and let $G'$ be the linear Gauss diagram obtained from $G$ by a $R_1$-move in Fig. \ref{fig:ex7}. Then $\bar{\phi}(G')$ is equal to $\bar{\phi}(G)$ since the new chord of $G'$ makes the word as $aa^{-1}$.

Let $G$ be a linear Gauss diagram for a long knot diagram on the cylinder, and let $G'$ be the linear Gauss diagram obtained from $G$ by a $R_2$-move in Fig. \ref{fig:ex7}. Then $\bar{\phi}(G')$ is equal to $\bar{\phi}(G)$ since the new chords of $G'$ give rise to canceling letters.

Let $G$ be a linear Gauss diagram for a long knot diagram on the cylinder, and let $G'$ be the linear Gauss diagram obtained from $G$ by a $R_3$-move in Fig. \ref{fig:ex7}. The three chords of $G$ related to $R_3$ move have two cases; the first one is that they are all even chords, and the second case is that one of them is an even chord and others are two odd chords. In the first case, the letters at the endpoints are all $a$. Thus, $\bar{\phi}(G)$ is equal to $\bar{\phi}(G')$. But, in the second case, we get the relations
$a^{\pm 1}(c^{\bullet})^{\mp1}=(c^{\bullet})'^{\mp1}a^{\pm 1}, 
(c^{\bullet})^{\pm 1}a^{\mp1}=a^{\mp1}(c^{\bullet})'^{\pm1},
a^{\pm1}(b^{\bullet})^{\mp1}=(b^{\bullet})'^{\mp1}a^{\pm1}, 
(b^{\bullet})^{\pm1}a^{\mp1}= a^{\mp1}(b^{\bullet})'^{\pm1},
b^{\pm1}c^{\mp1}= (c')^{\mp1}(b')^{\pm1},
(b')^{\pm1}c^{\mp1}= (c')^{\mp1}b^{\pm1}, 
b^{\pm1}(c')^{\mp1}= c^{\mp1}(b')^{\pm1}, 
(b')^{\pm1}(c')^{\mp1}= (c')^{\mp1}(b')^{\pm1}$, where $b^{\bullet}$ ($c^{\bullet}$, resp.) is either $b$ ($c$, resp.) or $b'$ ($c'$, resp.), $(b')'=b$, and $(c')'=c$. Actually, from these relations, we get by the relations $b'=aba^{-1}=a^{-1}ba$, $c'=aca^{-1}=a^{-1}ca$, $a^{2}b=ba^{2}$, $a^{2}c=ca^{2}$,  $b^{2}a=ab^{2}$, $c^{2}a=ac^{2}$, $abc=cba$, and $cab=bac$.
\end{proof}

\section{Post scriptum}

Free groups (or free products of cyclic groups) are very pleasant from lots of points of view: word problem, conjugacy problem, invertibility etc. Solutions for many problems can be obtained by looking at just one element.

We present a certain way of encoding ``knotted objects'' by such easy groups (actually, with some loss of information; rather, we construct an invariant). We expect to tackle various problems in knot theory and low-dimensional topology (say, construct invariants of 3-manifolds) by looking at some elements in free (product of cyclic) groups.

\end{document}